\newtheorem{theorem}{Theorem}
\newtheorem{lemma}{Lemma}
\newtheorem{corollary}{Corollary}
\theoremstyle{definition}
\newtheorem{definition}{Definition}
\theoremstyle{remark}
\newtheorem{remark}{Remark}
\numberwithin{equation}{section}
\newcommand{\field}[1]{\ensuremath{\mathbb{#1}}}
\newcommand{\CC}{\field{C}}
\newcommand{\DD}{\field{D}}
\newcommand{\HH}{\field{H}}
\newcommand{\RR}{\field{R}}
\DeclareMathOperator{\re}{Re}
\newcommand{\delb}{\bar\partial}
\newcommand{\fS}{\mathfrak{S}}
\newcommand{\curly}[1]{\mathscr{#1}}
\newcommand{\cA}{\curly{A}}
\newcommand{\cC}{\curly{C}}
\newcommand{\cE}{\curly{E}}
\newcommand{\cF}{\curly{F}}
\newcommand{\cH}{\curly{H}}
\newcommand{\cK}{\curly{K}}
\newcommand{\cL}{\curly{L}}
\newcommand{\cN}{\curly{N}}
\newcommand{\cP}{\curly{P}}
\newcommand{\Ga}{\Gamma}
\newcommand{\PSL}{\mathrm{PSL}(2,\mathbb{R})}
\newcommand{\Ur}{\mathrm{U}(r)}
\newcommand{\OO}{\mathcal{O}}
 \DeclareMathOperator{\Hom}{Hom}
\DeclareMathOperator{\Aut}{Aut}
\DeclareMathOperator{\GL}{GL} 
 \DeclareMathOperator{\Ad}{Ad}
\numberwithin{equation}{section}
\begin{document}

\title[On Shimura's isomorphism]{On Shimura's isomorphism and $(\Ga, G)$-bundles on the upper-half plane}


\author[Meneses]{Claudio Meneses}
\address{Max Planck Institut f\"ur Mathematik, Vivatsgasse 7, 53111 Bonn, Germany}
\curraddr{CIMAT A.C., 
  Jalisco S/N, Col. Valenciana CP: 36240, Gto., Mexico}
\email{claudio.meneses@cimat.mx, claudio.meneses.torres@gmail.com}
\thanks{}


\subjclass[2010]{Primary 30F35, 32G13; Secondary 20J06}

\date{}

\begin{abstract}
For a compact real form $U$ of a complex simple Lie group $G$, and an irreducible representation $\rho:\Ga \to U$ of a Fuchsian group of the first kind $\Ga$, it is shown that the classical isomorphism of Shimura, for the periods of a cusp form of weight 2 with values in $\mathfrak{g}$ and the representation $\Ad\rho:\Ga\to\Aut\mathfrak{g}$, can be interpreted as the differential at a point of the zero section, for a natural map from the cotangent bundle of the moduli space of certain $(\Gamma, G)$-bundles over $\HH$ (in the sense of Seshadri) to an open set in the smooth locus of the character variety $\Hom_{\mathbf{g}}(\Ga,G)/PG$. Emphasis is put on analytic techniques.
\end{abstract}

\maketitle


\tableofcontents
\section{Introduction}\label{section:intro}
Let $G$ be a complex finite-dimensional simple Lie group with Lie algebra $\mathfrak{g}$, $U\subset G$ a compact real form, and $\rho:\Gamma\to U$ an irreducible representation of an arbitrary Fuchsian group of the first kind $\Ga$.   A \emph{$\mathfrak{g}$-valued cusp form of weight 2 with the representation $\Ad\rho$} is a holomorphic function $\Phi:\HH\to \mathfrak{g}$ satisfying the functional equation
\begin{equation}\label{automorphic}
\Phi(\gamma\tau)\gamma'(\tau)=\Ad\rho(\gamma)\cdot \Phi(\tau),\qquad \forall\gamma\in\Ga,\;\tau\in\HH,
\end{equation}
where $\Ad\rho$ is the composition of $\rho$ and the adjoint representation of $G$ in $\mathfrak{g}$,
such that for every cusp $\tau_{i}\in\RR\cup\{\infty\}$, 
\begin{equation}\label{limit}
\lim_{\tau\to\tau_{i}}\Phi(\tau)=0,
\end{equation}
where the limit is understood to be taken within a fundamental region $\mathfrak{F}$. The spaces $\mathfrak{S}_{2}(\Ga,\Ad\rho)$ of cusp forms of weight 2 (and of arbitrary positive even weight) were introduced by Shimura \cite{Shimura71}, are finite dimensional, and thanks to the Killing form in $\mathfrak{g}$, admit a Hermitian inner product generalizing the Petersson inner product \cite{Petersson40, Petersson48}. 

The author's interest in the subject stems from the humble realization that the theory of moduli of vector bundles with parabolic structures over Riemann surfaces, as in the work of  Narasimhan-Mehta-Seshadri \cite{N-S65, MS80},  can be fully understood in terms of complex analysis on the upper half-plane, and more concretely, the harmonic theory for families of Cauchy-Riemann operators. Such a complex-analytic approach, already implicit in the seminal work of Andr\'e Weil \cite{Weil38}, has been pursued, for instance, in \cite{TZ07}, 
and is sufficiently flexible to allow the replacement of the groups $\Ur\subset\GL(r,\CC)$,  by general simple pairs $U\subset G$, and their corresponding geometric structures. 
These are the objects we will concentrate our attention on in this paper. 

Ideally, one wishes to associate, to every irreducible homomorphism $\rho:\Ga\to U$, some suitable geometric data consisting of 
\begin{itemize}
\item[(i)] A holomorphic principal $G$-bundle $P$ over the compact Riemann surface $\Ga\setminus\HH^{*}$, with additional data in the fibers of each cusp and elliptic fixed point, generalizing the notion of a parabolic structure, and satisfying a suitable notion of parabolic stability,
\item[(ii)] A reduction of the structure group of $P$ to $U$, away from the cusps and elliptic fixed points,
\item[(iii)] An irreducible singular flat $U$-connection on $P$, whose holonomy representation corresponds to $\rho$. 
\end{itemize}
One would then expect a correspondence between the set of $U$-conjugacy clases $\{\rho\}$ of irreducible representations $\rho$ for which each parabolic and elliptic generator lies in an a priori given conjugacy class in $U$, and the set of equivalence classes $\{P\}$ of stable parabolic $G$-bundles on $\Ga\setminus\HH^{*}$ with prescribed weight data, explicitly realized by means of the existence of a singular irreducible flat $U$-connection in $P$. While this is true for stable principal $G$-bundles \cite{Ramanathan75} (i.e., purely hyperbolic Fuchsian groups), it turns out that such association is not possible in general for arbitrary Fuchsian groups of the first kind, and only in the generic case when the weights (for all parabolic and elliptic generators) lie in the interior of a Weyl alcove for a choice of maximal torus in $U$. In general, the geometric structures suitable for this purpose have been called \emph{parahoric G-torsors} by Balaji-Seshadri (see \cite{BS15} and references therein), and indeed satisfy the previous uniformization principle, generalizing the results in \cite{N-S65,MS80}. Due two the concrete objectives of the present work, our perspective is based, instead, on working over $\HH$ in terms of a complex-analytic framework, and directly considering some induced spaces of global holomorphic sections (in the sense of sheaf cohomology) with suitable asymptotic behavior, as spaces of $\mathfrak{g}$-valued automorphic forms satisfying the cusp form condition. 

In another stream of ideas, the study of arithmetic properties of automorphic functions let Goro Shimura to generalize the work of Eichler to periods associated to representations of Fuchsian groups on compact groups. For weight 2, the celebrated isomorphism of Shimura \cite{Shimura71} reads
\[
\mathfrak{S}_{2}(\Ga,\Ad\rho)\oplus\overline{\mathfrak{S}_{2}(\Ga,\Ad\rho)}\cong H_{P}^{1}(\Ga,\mathfrak{g}_{\Ad\rho}).
\]
One is undoubtedly left to notice that, on one hand, the parabolic cohomology $H_{P}^{1}(\Ga,\mathfrak{g}_{\Ad\rho})$ models the tangent space at a $G$-equivalence class $\{\rho\}$ of representations in a complex character variety $\cK_{\CC}$, and on the the other, the spaces $\mathfrak{S}_{2}(\Ga,\Ad\rho)$ and $\overline{\mathfrak{S}_{2}(\Ga,\Ad\rho)}$ model, respectively, the tangent and cotangent spaces at a point in a certain moduli space $\cN$ of $(\Ga,G)$-principal bundles over $\HH$ \cite{TZ07} (cf. \cite{Ivanov88}). 

The two previous pictures get connected after one notices it is possible to associate, in a natural and injective way, an irreducible complex representation $\chi:\Ga\to G$ to a pair consisting of an irreducible representation $\rho:\Ga\to U$, and an element $\Phi\in\mathfrak{S}_{2}(\Gamma,\Ad\rho)$ (see section \ref{section:deformation}), in such a way that the $(\Ga,G)$-bundles over $\HH$ induced by $\chi$ and $\rho$ are holomorphically equivalent, although equipped with nonequivalent singular flat connections.\footnote{A word of caution: $\chi$ \emph{is not the same} than the induced representation considered in the study of harmonic metrics when $G=\GL(r,\CC)$.} We refer to such correspondence as the \emph{Narasimhan-Mehta-Seshadri map}. Considering the zero section $s_{0}$ of the cotangent bundle $T^{*}\cN$, induces a canonical splitting 
\[
s_{0}^{*}\left(T(T^{*}\cN) \right)\cong T^{*}\cN\oplus T\cN,
\]
as vector bundles over $\cN$. Thus, Shimura's work on parabolic cohomology \cite{Shimura71} admits a natural geometric interpretation in the previous setup (a fact observed by Goldman \cite{Gol84} for the group $\PSL$, scalar cusp forms of higher weight, and their Eichler periods). The main result of this work is the following theorem.

\begin{theorem}\label{main theo}
With respect to the complex coordinates given by the choice of an arbitrary basis in $\mathfrak{S}_{2}(\Gamma,\Ad\rho)\oplus\overline{\mathfrak{S}_{2}(\Gamma,\Ad\rho)}$, the differential of the Narasimhan-Mehta-Seshadri map $\mathscr{F}:T^{*}\cN^{s}\to\cK_{\CC}$ at the point $\left\{P_{\rho},0\right\}\in T^{*}\cN^{s}$ in the image of the zero section  $s_{0}: \cN^{s} \to T^{*}\cN^{s}$ is given as 
\begin{equation}
d\cF_{\{P_{\rho},0\}}(\Psi,\nu)= \cL_{S}\left(\Psi,\nu\right),
\end{equation}
where $\cL_{S}$ is Shimura's isomorphism.
\end{theorem}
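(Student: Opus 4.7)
The plan is to linearize the Narasimhan--Mehta--Seshadri construction of Section~\ref{section:deformation} at the zero section of $T^{*}\cN^{s}$, express the resulting variation of $\chi$ as an explicit Eichler-type $1$-cocycle, and match it directly with Shimura's map $\cL_{S}$. To begin, I would choose a smooth path $t\mapsto (P_{\rho_{t}}, t\mu_{t})$ in $T^{*}\cN^{s}$ with $(P_{\rho_{0}},0)=(P_{\rho},0)$ and initial derivative $(\Psi,\nu)\in T_{P_{\rho}}\cN\oplus T^{*}_{P_{\rho}}\cN$. Under the harmonic-theoretic identifications of these spaces with $\mathfrak{S}_{2}(\Gamma,\Ad\rho)$ and $\overline{\mathfrak{S}_{2}(\Gamma,\Ad\rho)}$ that model the tangent and cotangent directions on $\cN$, the pair $(\Psi,\nu)$ corresponds to a pair of cusp forms $(\Phi_{\Psi},\overline{\Theta_{\nu}})$.

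Applying $\mathscr{F}$ along this path yields a smooth family $\chi_{t}\in \cK_{\CC}$ with $\chi_{0}=\rho$ (under the canonical inclusion $U\subset G$), and hence a one-parameter expansion
\[
\chi_{t}(\gamma) = \rho(\gamma)\exp\bigl(t\,c(\gamma) + O(t^{2})\bigr),\qquad \gamma\in\Gamma.
\]
Multiplicativity of $\chi_{t}$ forces $c(\gamma\delta) = c(\gamma) + \Ad\rho(\gamma)\cdot c(\delta)$, so that $[c]\in H^{1}(\Gamma,\mathfrak{g}_{\Ad\rho})$ is by definition the differential $d\mathscr{F}_{\{P_{\rho},0\}}(\Psi,\nu)$. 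Unpacking the construction of $\mathscr{F}$, $\chi_{t}$ is the monodromy of a flat $G$-connection obtained from the $U$-flat connection of $\rho$ by twisting with $t\omega + O(t^{2})$, where $\omega = \Phi_{\Psi}\,d\tau + \overline{\Theta_{\nu}}\,d\bar\tau$. The functional equation \eqref{automorphic} renders $\omega$ equivariant under $(\gamma,\Ad\rho(\gamma))$, and a standard path-ordered exponential computation then produces
\[
c(\gamma) = \int_{\tau_{0}}^{\gamma\tau_{0}} \omega.
\]

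The cocycle condition for $c$ is automatic from the equivariance of $\omega$. The vanishing \eqref{limit} of cusp forms at each cusp ensures that for every parabolic generator $\gamma_{i}\in\Gamma$ one has $c(\gamma_{i}) = (\Ad\rho(\gamma_{i})-\id)\cdot v_{i}$ for some $v_{i}\in\mathfrak{g}$, placing $[c]$ inside $H_{P}^{1}(\Gamma,\mathfrak{g}_{\Ad\rho})$. Since the Eichler integration $\gamma\mapsto \int_{\tau_{0}}^{\gamma\tau_{0}}\omega$ is, by inspection, the explicit formula for Shimura's isomorphism $\cL_{S}$ evaluated on the pair $(\Phi_{\Psi},\overline{\Theta_{\nu}})$ of \cite{Shimura71}, the identification $d\mathscr{F}_{\{P_{\rho},0\}}(\Psi,\nu) = \cL_{S}(\Psi,\nu)$ follows.

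The most delicate step is the third: verifying that the linearization of $\mathscr{F}$ at the zero section yields precisely the $1$-form $\omega$, with no spurious corrections coming from the variation of the base point $P_{\rho_{t}}$ in $\cN$. This requires showing that the infinitesimal deformation of the $(\Gamma,G)$-bundle $P_{\rho}$, encoded via Hodge theory by a harmonic Beltrami coefficient representing $\Psi$, and the Higgs-like contribution $t\mu_{t}$ representing $\nu$, assemble coherently to leading order, so that their Dolbeault representatives are exactly the $(1,0)$- and $(0,1)$-parts of $\omega$. The accompanying boundary analysis near the cusps, needed both for convergence of the Eichler integrals and for the parabolic cohomology condition, is a closely related but more routine analytic point.
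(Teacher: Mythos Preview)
Your overall shape is right and matches the paper's strategy: linearize $\cF$ at $(P_{\rho},0)$, read off a parabolic $1$-cocycle, and identify it with the Eichler period map. The fiber (cotangent) direction is fine: for $\epsilon\Psi\in\mathfrak{S}_{2}(\Gamma,\Ad\rho)$ the map $\cF$ is literally the monodromy of the holomorphic ODE $f_{\tau}f^{-1}=\epsilon\Psi$ (Lemma~\ref{lemma:fundamental2}), and your path-ordered integral gives exactly $z_{\Psi}$.

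The genuine gap is in the base (tangent) direction. The map $\cF$ is \emph{not} defined on the zero section as the monodromy of a connection $d+t\omega$; on $s_{0}$ it is the tautological identification $\{P_{\rho^{\epsilon\nu}}\}\mapsto\{\rho^{\epsilon\nu}\}$, and $\rho^{\epsilon\nu}$ is produced by the two-step procedure of Lemma~\ref{lemma:fundamental}: first solve the antiholomorphic equation $f_{-}^{-1}(f_{-})_{\bar\tau}=\epsilon\nu$, obtaining a \emph{non-unitary} monodromy $\chi$, and then unitarize by left-multiplying with a holomorphic $f_{+}$. Your heuristic ``twist by $\nu\,d\bar\tau$'' captures only $f_{-}$; the correction $f_{+}$ could in principle shift the cocycle by a non-coboundary, and nothing in your sketch rules this out. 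What the paper actually uses here is Lemma~\ref{lemma:vanishing}: the first-order vanishing $\partial_{\epsilon}\bigl((f^{\epsilon\nu})^{-1}\overline{f^{\epsilon\nu}}\bigr)\big|_{\epsilon=0}=0$, proved via harmonicity and the simplicity of $\mathfrak{g}$, which forces $\partial_{\bar\epsilon}f^{\epsilon\nu}|_{\epsilon=0}=\overline{\partial_{\epsilon}f^{\epsilon\nu}|_{\epsilon=0}}$. Only with this in hand does Lemma~\ref{lemma:var} show that $\dot f^{-}$ is a holomorphic Eichler integral with $(\dot f^{-})'=\overline{\nu}$, and that the associated cocycle equals $\overline{z_{\overline{\nu}}}$, matching the second summand in $\cL_{S}(\Psi,\nu)=[z_{\Psi}+\overline{z_{\overline{\nu}}}]$. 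Your last paragraph correctly locates the difficulty but does not supply this argument; without Lemma~\ref{lemma:vanishing} (or an equivalent), the identification in the base direction remains unproved.

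A minor point: your labeling of $(\Psi,\nu)$ as $(T\cN,T^{*}\cN)$ is the reverse of the paper's convention, where $\Psi\in\mathfrak{S}_{2}$ is the cotangent (fiber) coordinate and $\nu\in\overline{\mathfrak{S}_{2}}$ is the tangent (base) coordinate.
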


In other words, theorem \ref{main theo} states that Shimura's isomorphism \cite{Shimura71} between complex parabolic cohomology and spaces of $\mathfrak{g}$-valued cusp forms of weight 2 with the representation $\Ad\rho$ is essentially the infinitesimal version of the Narasimhan-Mehta-Seshadri map $\cF:T^{*}\cN^{s}\to\cK_{\CC}$ over $s_{0}$, a fact that illustrates explicitly the trascendental nature of $\cF$. 
In fact, it is possible to say more: $\cF$ is a real-analytic monomorphism, and $\cF \circ s_{0}$ is also a symplectomorphism onto its image, once the unitary character variety $\cK^{s}$ is endowed with Goldman's symplectic form, and the natural K\"ahler structure induced from Petersson inner product is considered over $\cN^{s}$. 


The contents of this work are organized as follows: section \ref{section:generalities} presents the conventions in which the present work is founded; section \ref{section:char} describes the construction of the character variety $\Hom_{\mathbf{t}}(\Ga,G)/G$; in section \ref{section:deformation}, we develop the infinitesimal deformation theory of $(\Ga, G)$-bundles from a complex analytic perspective. Finally, a proof of theorem \ref{main theo} (and some corollaries) is presented in section \ref{section:Shimura}. 

I am deeply grateful to the Max Planck Institute for Mathematics in Bonn, where the present work was developed, for its generosity and stimulating working atmosphere, and to CIMAT in Mexico, where the manuscript was put in final form. I would also like to warmly thank Indranil Biswas for pointing out an important issue in the first version of this manuscript, and the referee for the useful comments provided.

\section{Lie algebra-valued cusp forms of weight 2}\label{section:generalities}

Let $\Ga$ be a Fuchsian group (i.e. a discrete subgroup of $\PSL$)  of the first kind. 
It is a classical result \cite{Hejhal83,Lehner64}
that $\Gamma$ is finitely generated and admits an explicit presentation in terms of $2g$ hyperbolic generators $A_{1},\dots,A_{g},B_{1},\dots,B_{g},$ $m$ elliptic generators $S_{1},\dots,S_{m}$, and $n$ parabolic generators $T_{m+1},\cdots,T_{m+n}$, with relations 
\[
\prod_{i=1}^{g}[A_{i},B_{i}]\cdot\prod_{j=1}^{m}S_{j}\cdot\prod_{k=1}^{n}T_{m+k}=e,  
\]
\[
S_{1}^{c_{1}}=\cdots =S_{m}^{c_{m}}=e,
\]
for some integers $2\leq c_{j}<\infty$. When talking about a Fuchsian group, we will assume to have fixed once and for all one such \emph{marking}.
 
The \emph{signature} of $\Gamma$ is the collection of nonnegative integers $(g;c_{1},\dots,c_{m};n)$, $j=1,\dots,m$, subject to the hyperbolicity condition 
\[
2g-2 +\sum_{i=1}^{m}\left(1-\frac{1}{c_{i}}\right)+n > 0.
\]
The equivalence relation on Fuchsian groups is conjugation in $\PSL$.
$\Gamma$ admits a fundamental region $\mathfrak{F}$ of finite hyperbolic area, bounded by a finite number of geodesic arcs and vertices determined by the fixed points of the generators $\{S_{1},\dots,S_{m},T_{m+1},\dots,T_{m+n}\}$. We will denote by $\{e_{1},\cdots,e_{m}\}\subset\mathbb{H}$ the set of elliptic fixed points ($S_{i}(e_{i})=e_{i}$), and by $\{p_{1},\cdots,p_{n}\}\subset\mathbb{R}\cup\{\infty\}$ the set of cusps ($T_{m+i}(p_{i})=p_{i}$) in the fundamental region $\mathfrak{F}$. We follow Shimura's convention \cite{Shimura71} on the construction of the compact quotient 
\[
S=\Gamma\setminus\mathbb{H}^{*}
\]
with local elliptic (resp. parabolic) coordinates  $\zeta_{i}=\zeta^{c_{i}}\circ\sigma^{-1}_{i}$ (resp. $q_{i}=q\circ\sigma_{i}^{-1}$) where $q=e^{2\pi\sqrt{-1}\tau}$, $\zeta:\HH\to\DD$ is the Cayley mapping, and $\sigma_{1},\dots,\sigma_{m+n} \in \text{PSL}(2,\mathbb{R})$ satisfy 
\begin{align}
\zeta\left((\sigma_{i}^{-1}S_{i}\sigma_{i})(\tau)\right)
& = e^{2\pi\sqrt{-1}/c_{i}}\cdot\zeta(\tau), \qquad & i\leq m \label{can-elliptic}\\
\left(\sigma_{i}^{-1}T_{i}\sigma_{i}\right)(\tau) & =\tau\pm 1,\qquad & i>m. \label{can-cusp}
\end{align}

\subsection{Spaces of $\mathfrak{g}$-valued cusp forms} 

Let $\rho:\Ga\to U$ be irreducible with induced complex representation $\Ad\rho:\Ga\to\Aut\mathfrak{g}$, $p,q\in\{0,1\}$, and let us denote by $\cC^{p,q}_{c}(\Ga,\Ad\rho)$ the space of smooth, $\mathfrak{g}$-valued functions on $\HH$, compactly supported within a fundamental region $\mathfrak{F}$ for $\Ga$, that satisfy
\[
\Phi(\gamma\tau)\gamma'(\tau)^{p}\overline{\gamma'(\tau)^{q}}=\Ad\rho(\gamma)\cdot \Phi(\tau)\qquad \forall\;\tau\in\HH,\;\;\gamma\in\Ga.
\]
For any 
$\Phi_{1},\Phi_{2}\in\cC^{p,q}_{c}(\Ga,\Ad\rho)$, their \emph{Petersson inner product} is defined as 
\begin{equation}\label{Petersson}
\langle \Phi_{1},\Phi_{2}\rangle_{P} = \iint\limits_{\mathfrak{F}}\langle \Phi_{1},\Phi_{2}\rangle\;y^{2p+2q-2}dx dy
\end{equation}
where $\tau=x+\sqrt{-1}y$, and $\langle \Phi_{1},\Phi_{2}\rangle=-K\left(\Phi_{1},\overline{\Phi_{2}}\right)$ is, pointwise,  the Hermitian inner product in $\mathfrak{g}$ determined by the Killing form $K$ and the Cartan involution $X\mapsto \overline{X}$ defined by complex conjugation of the realization $\mathfrak{g}\cong\mathfrak{u}_{\CC}$ (for $\mathfrak{u}$ the Lie algebra of $U$) \cite{Hel79}. A fundamental property of the Petersson inner product is its invariance under the induced action of $\Ga$ in $\cC^{p,q}_{c}(\Ga,\Ad\rho)$. Let us denote by $\cH^{p,q}(\Gamma,\Ad\rho)$ the completion of $\cC^{p,q}_{c}(\Ga,\Ad\rho)$ to a Hilbert space under the Petersson inner product. In terms of the Cauchy-Riemann operators 
\[
\bar{\partial}^{p,q}:\cH^{p,q}(\Gamma,\Ad\rho)\to\cH^{p,q+1}(\Gamma,\Ad\rho),
\] 
it is also possible to define the spaces of $\mathfrak{g}$-valued cusp forms of weight 2 as
\[
\mathfrak{S}_{2}(\Ga,\Ad\rho)=\ker\bar{\partial}^{1,0}.
\]

\section{Character varieties for simple groups}\label{section:char}

We recall the construction of the smooth locus of a character variety for a Fuchsian group $\Ga$ and a simple group $G$. For details, the reader is referred to \cite{Gol84} and references therein. 

Let $T\subset U$ be a choice of maximal torus for $U\subset G$, and let $\mathfrak{t}\subset\mathfrak{u}$ denote its Lie algebra. Let us fix, for each parabolic and elliptic generator of $\Ga$, an element $g_{i}\in T$. Each such $g_{i}$ has an induced adjoint orbit in $G$, $\OO_{i}=[g_{i}]\subset G$, $i=1,\dots,m+n$. We will denote $\mathbf{g}=\{g_{i}\}_{i=1}^{m+n}$. Consider the subset $\Hom_{\mathbf{g}}(\Ga,G)\subset\Hom(\Ga,G)$ consisting of representations $\chi:\Ga\to G$ with $\chi(S_{i})\in\OO_{i}$, $\chi(T_{i})\in\OO_{i}$. The smooth locus $\cP$ in $\Hom_{\mathbf{g}}(\Ga,G)$ is the collection of representations $\chi:\Ga\to G$ satisfying 
\begin{equation*}
\dim H^{0}(\Ga,\Ad\chi) = \dim \mathfrak{g}^{\Ad\chi} = \dim \mathfrak{g}^{\Ad G}=0,
\end{equation*}
which is an open condition. More specifically, $\Hom_{\mathbf{g}}(\Ga,G)=\psi^{-1}(I)$, where $\psi$ is the map 
\[
\psi :G^{2g}\times
\prod_{i=1}^{n+m}\mathcal{O}^{i}
\rightarrow U,
\]
\[
(M_{1},N_{1},\dots,M_{g},N_{g},R_{1},\dots,R_{n+m})
\mapsto  \prod_{i=1}^{g}[M_{i},N_{i}]\prod_{i=1}^{n+m} R_{i},
\]
and the smooth locus is the subset $\cP\subset\psi^{-1}(I)$ of regular points of $\psi$, consisting of representations $\chi$ for which $\dim Z(\chi)=\dim Z(G)=0$.
Even though the action of $PG=G/Z(G)$ on $\cP$ by conjugation is locally free and generically free, it is not proper in general. The Zariski open set $\Hom_{\mathbf{g}}^{0}(\Ga,G)$ in $\cP$ where $G$ acts freely and properly is the space of $\chi$ whose image in $G$ does not lie in a group of the form $P\times Z(G)$, for $P$ a parabolic subgroup of $[G,G]=G$.  Such a restriction leads to the construction of the  \emph{smooth locus} 
\[
\cK_{\CC}=\Hom_{\mathbf{g}}^{0}(\Ga,G)/PG
\]
of the character variety $\Hom_{\mathbf{g}}(\Ga,G)/PG$. Since $G$ is simple, if nonempty, $\cK_{\CC}$ is a complex-analytic manifold of dimension 
\[
2(g-1)\dim_{\CC} G  +\sum_{i=1}^{n+m}\dim_{\CC}\mathcal{O}^{i},
\] 
where $\dim_{\CC} \OO_{i} =\textrm{rank} (\Ad (g_{i})-I)$ on $\mathfrak{g}$.
The infinitesimal deformations of a given $\chi\in \Hom_{\mathbf{g}}^{0}(\Ga,G)$, leaving fixed the conjugacy classes of its elliptic and parabolic generators, are determined by the space of \emph{parabolic 1-cocycles} $Z^{1}_{P}\left(\Ga,\mathfrak{g}_{\Ad\chi}\right)$, consisting of maps $z:\Ga\to \mathfrak{g}$ satisfying 
\[
z(\gamma_{1}\gamma_{2})=z(\gamma_{1})+\Ad\chi(\gamma_{1})\cdot z(\gamma_{2}),\quad \forall\;\gamma_{1},\gamma_{2}\in\Ga
\]
and 
\[
z(S_{i})\in(\Ad\chi(S_{i})-I)\mathfrak{g},\qquad z(T_{j})\in(\Ad\chi(T_{j})-I)\mathfrak{g}.
\]
The infinitesimal deformations tangential to the $PG$-orbit of $\chi$ in $\Hom_{\mathbf{g}}^{0}(\Ga,G)$ are precisely the 1-coboundary maps $z(\gamma)=(\Ad\chi(\gamma)-I)X$, $X\in\mathfrak{g}$, and hence the tangent space at the equivalence class $\{\chi\}\in\cK_{\CC}$ is modeled by the \emph{first parabolic cohomology space} $H^{1}_{P}(\Ga,\mathfrak{g}_{\Ad\chi})$ (see \cite{Shimura71} for details on these spaces; their dimensions were calculated by Weil \cite{Weil64}).   

\begin{remark}
In general, a representation $\chi\in \Hom^{0}_{\mathbf{g}}(\Ga,G)$ need not be irreducible. The collection of equivalence classes of irreducible representations (without inner automorphisms in $PG$) forms an open set in $\cK_{\CC}$. 
\end{remark}

Repeating the previous construction with $U$ instead, we conclude that there is a real-analytic manifold $\cK^{s}$ of real dimension $\dim_{\CC}\cK_{\CC}$, consisting of orbits of irreducible representations $\rho:\Ga\to U$ with prescribed conjugacy classes of parabolic and elliptic generators (where $PU$ acts freely and without adjoint invariants in $\mathfrak{u}$), since it readily follows that an irreducible representation $\rho: \Gamma \to U$ cannot have inner automorphisms in $PU$. In such case, the possible values of parabolic and elliptic elements are parametrized by the generalized flag manifolds $\cF^{i}=U/Z\left(g_{i}\right)$, together with an inclusion $\cK^{s}\hookrightarrow \cK_{\CC}$. In turn, the tangent space at an equivalence class $\{\rho\}\in\cK^{s}$ is modeled by the space $H^{1}_{P}(\Ga,\mathfrak{u}_{\Ad\rho})$.

\section{Deformation theory of $(\Ga,G)$-principal bundles on $\HH$}\label{section:deformation}

Given a representation $\chi:\Ga\to G$, \emph{the induced $(\Ga,G)$-bundle on $\HH$}  is the trivial bundle $\HH\times G$ with $\chi$ acting as bundle automorphisms commuting with the right $G$-action \cite{BS15,Seshadri69}. Hence, two $(\Ga,G)$-bundles induced by $\chi_{1}$, $\chi_{2}$ are \emph{equivalent} if there exists a holomorphic map $f:\HH\to G$, satisfying $f(\gamma\tau)=\chi_{1}(\gamma)f(\tau)\chi_{2}(\gamma)^{-1}$ $\forall \gamma\in\Ga$, $\tau\in\HH$, and whose limit exists at every cusp.\footnote{Observe that, in particular, the conjugacy classes of $\chi_{1}$ and $\chi_{2}$ at every parabolic and elliptic generator are necessarily the same.} In particular, the harmonic theory implies that a $(\Ga,G)$-bundle induced by $\chi$ is isomorphic to at most one induced by a unitary representation $\rho:\Ga\to U$, up to conjugation in $U$. Let $\{P_{\rho}\}$ denote the class of $(\Ga,G)$-bundles on $\HH$ induced by a class of irreducible representations $\{\rho\}\in\cK^{s}$. 
Our aim is to introduce complex coordinates and a manifold structure over the set $\cN^{s}=\{\{P_{\rho}\}_{\{\rho\}\in\cK}\}$. The results of the present section are a generalization of the analysis developed in \cite{TZ07} for stable parabolic bundles to $(\Ga,G)$-bundles on $\HH$, and are, in turn, inspired by the theory of quasiconformal mappings of Ahlfors-Bers. Thanks to \cite{BS15}, the resulting complex manifold corresponds to the moduli space of stable parahoric $G$-torsors with prescribed parabolic structure whose automorphism group is equal to $Z(G)$.

On the analytic side, by considering open sets in $\HH$ invariant under $\Ga$, one can define the sheaf $\mathbf{P}_{\rho}$ on $\HH$, of holomorphic $G$-valued maps which are automorphic with respect to $\rho$, and are bounded at the cusps. By the general deformation theory of Kodaira-Spencer, its infinitesimal deformations are encoded in the cohomology 
\[
H^{1}(\HH,\mathbf{E}_{\Ad\rho}),
\]
where $\mathbf{E}_{\Ad\rho}$ is the sheaf of holomorphic $\mathfrak{g}$-valued maps on $\HH$ which are automorphic with respect to the composition $\Ad\rho=\Ad\circ \rho$ and bounded at the cusps.
Such cohomology is naturally isomorphic, by Serre duality, to the dual space $\mathfrak{S}_{2}(\Ga,\Ad\rho)^{\vee}$, and  models the holomorphic cotangent space at $\{P_{\rho}\}$ (cf. \cite{Men14}, where the proof for the unitary case is sketched in detail; the argument for a general simple group $G$ is essentially the same, at least when the parabolic and elliptic weights are generic, and there is an underlying principal bundle with parabolic structure over $S=\Ga\setminus\HH^{*}$).\footnote{The geometric interpretation of $\mathfrak{g}$-valued cusp forms of weight 2 as global holomorphic sections of the sheaf $\mathbf{E}_{\Ad\rho}^{\vee}\otimes K_{S}$ enables the computation of the dimension of the space $\mathfrak{S}_{2}(\Ga,\Ad\rho)$ by means of the Riemann-Roch theorem (cf. \cite{Men14}).}

We have pointed out that the space $\mathfrak{S}_{2}(\Ga,\Ad\rho)$ of cusp forms of weight 2 corresponds to the kernel of the Cauchy-Riemann operator over the Hilbert space $\cH^{1,0}(\Ga,\Ad\rho)$. After consideration of duality and the Petersson inner product, such fact is consistent with the Dolbeaut isomorphism of sheaves 
\[
H^{1}(\HH,\mathbf{E}_{\Ad\rho})\cong H^{0,1}_{\delb}(\HH,\mathbf{E}_{\Ad\rho}).
\]
Moreover, thanks to the Petersson inner product, the previous isomorphism can be enhanced if one considers the Hodge Laplacian $\Delta=\delb^{*}$ over $\cH^{1,0}(\Ga,\Ad\rho)$; there is an additional isomorphism of the previous spaces with the space of harmonic $(0,1)$-forms with values in $\mathfrak{g}$, which are $\Ad\rho$-invariant and vanish at the cusps.

Since the family of $(\Ga,G)$-bundles on $\HH$ that we are considering is parametrized by the space $\cK^{s}$, for every $\rho$, there is an induced Kodaira-Spencer map
\[
H^{1}_{P}(\Ga,\mathfrak{u}_{\Ad\rho})\to H^1(\HH,\mathbf{E}_{\Ad\rho}),
\]
and the former space is, by the classical theorem of Shimura, isomorphic to the space $\fS_{2}(\Ga,\Ad\rho)$. Note that, as stated, Shimura's isomorphism is a real isomorphism. 

The following diagram summarizes and illustrates the role that the cusp forms of weight 2 and Shimura's isomorphism play in this dual set-up:

\vspace{8mm}

\centerline{
\begin{xy}
(0,0)*+{H^{1}_{P}\left(\Ga,\mathfrak{u}_{\Ad\rho}\right)}="a";%
(50,0)*+{H^{1}\left(\HH,\mathbf{E}_{\Ad\rho}\right)}="b"; 
(50,20)*+{\fS_{2}\left(\Ga,\Ad\rho\right)}="c";
(100,0)*+{H^{0,1}_{\delb}(\HH,\mathbf{E}_{\Ad\rho})}="e";
{\ar@{<->}_{\text{Kodaira-Spencer}} "a";"b"}
{\ar@{<->}^{\text{Shimura$\quad$}} "a";"c"}%
{\ar@{<->}_{\text{\v{C}ech+Serre}} "b";"c"}
{\ar@{<->}_{\text{Dolbeaut}} "b";"e"}
{\ar@{<->}^{\text{$\quad$Hodge+Serre}} "c";"e"}
\end{xy}}

\vspace{8mm}

\begin{remark}\label{remark:roots}
Let $R = \{\alpha\}$ be the set of roots associated to the abelian Lie algebra $\mathfrak {t}$, and $R = R^{+}\sqcup R^{-}$ a choice of positive and negative roots, which we will fix once and for all. The choice of positive roots $R^{+}$ determines the so-called \emph{Weyl alcove}, defined as 
\[
\cA(\mathfrak{t}) = \{t\in \mathfrak{t} \; :\; 0\leq \langle\alpha,t\rangle \leq 1\quad \forall \alpha\in R^{+}\}
\]
The Weyl alcove satisfies $\exp\left(2\pi\cA(\mathfrak{t})\right) = T$. Therefore, for each $1 \leq i \leq m + n$, there exists an element 
$t_{i}\in \cA(\mathfrak{t})$ in $\cA(\mathfrak{t})$, which will be called a choice of \emph{parabolic and elliptic weights} in $\mathfrak{u}$, satisfying $\exp(2\pi t_{i}) = g_{i}$.

The generalized flag manifolds $\cF^{i}$ introduced in section \ref{section:char} possess an induced complex structure in terms of the model $\cF^{i} \cong G/P_{i}$, where $P_{i} \subset G$ is the parabolic group with Lie algebra 
\[
\mathfrak{p_{i}} = \mathfrak{z}_{\mathfrak{g}}(t_{i}) \oplus\left(\displaystyle\bigoplus_{\alpha\in R^{+}_{i}} \mathfrak{g}_{\alpha}\right) =  \mathfrak{z}_{\mathfrak{g}}(t_{i}) \oplus \mathfrak{n}_{i}.
\] 
and $R_{i}^{+}\subset R^{+}$ is the subset of positive roots $\alpha$ for which $\alpha(t_{i}) > 0$. In turn, the centralizer $\mathfrak{z}_{\mathfrak{g}}(t_{i})$ coincides with the Lie algebra 
\[
\mathfrak{t}_{\CC}\oplus \left(\bigoplus_{\alpha \in R^{0}_{i}}\mathfrak{g}_{\alpha}\right)
\]
where $R^{0}_{i}\subset R$ is the subset of roots for which $\alpha(t_{i}) = 0$ and corresponds to the Levi complement Lie algebra of the unipotent radical Lie algebra $\mathfrak{n}_{i}$.

For every choice of parabolic generator $T_{i}$, let $U_{i}\in G$ be such that $\rho\left(T_{i}\right) = \Ad\left(U_{i}\right)\cdot g_{i}$. For the parabolic weight $t_{i}$, let us define the auxiliary holomorphic map $F_{i}:\HH \to G$ as 
\[
F_{i}(\tau) = U_{i}\exp \left(2\pi t_{i}\sigma^{-1}_{i}(\tau)\right)
\]
which satisfies the automorphic relation 
\begin{equation}
F_{i}\left(T_{i}\tau\right) = \rho\left(T_{i}\right)F_{i}(\tau).
\end{equation}
For any cusp form $\Phi\in\mathfrak{S}_{2}(\Gamma,\Ad\rho)$, the induced map $\Ad\left(F_{i}^{-1}\right)\cdot \Phi$ is $\langle T_{i}\rangle$-invariant, and consequently, there exist a holomorphic Fourier series expansion
\begin{equation}\label{eq:Fourier}
\Ad\left(F_{i}^{-1}(\sigma_{i}(\tau))\right)\cdot \Phi(\sigma_{i}(\tau))\sigma_{i}'(\tau) = \sum_{k = 0}^{\infty} B_{i}(k)q^{k},\qquad B_{i}(k)\in \mathfrak{g}.
\end{equation}
It follows that the cusp form condition \eqref{limit} is equivalent to 
\[
B_{i}(0) \in \mathfrak{n}_{i},
\]
a condition which is seen to be independent of the choice of representative $U_{i}$ in the corresponding $Z_{U}(g_{i})$-torsor. In fact, similar asymptotic formulas hold in the neighborhood of every elliptic fixed point, and are relevant in the study of the explicit deformations of parabolic and elliptic generators within the conjugacy classes $\mathcal{O}_{i}$. However, since they are not important in the determination of the cusp form condition, we will not consider them here.
\end{remark}

\begin{lemma}\label{lemma:fundamental} Let $\{\rho\}\in\cK^{s}$. 
\begin{itemize}
\item[(i)] For every $\Phi\in\mathfrak{S}_{2}(\Ga,\Ad\rho)$, there is a function $f_{\Phi}:\HH\to G$, holomorphic  and regular at each cusp, solving the differential equation $f_{\tau}f^{-1}=\Phi$, and satisfying
\begin{equation}\label{eq:monodromy1}
f_{\Phi}(\gamma\tau)=\rho(\gamma)f_{\Phi}(\tau)\chi(\gamma)^{-1}\qquad \forall\;\gamma\in \Ga,\;\; \tau\in\HH,
\end{equation}
for an irreducible representation $\chi \in \Hom^{0}_{\mathbf{g}}(\Gamma, G)$. More specifically, for every $m + 1 \leq i \leq m + n$, let $U_{i}\in U$ and $G_{i} \in G$ satisfy 
\[
\rho\left(T_{i}\right) = \Ad(U_{i})\cdot g_{i},\qquad \chi\left(T_{i}\right) = \Ad(G_{i})\cdot g_{i}. 
\]
Then
\[
f_{\Phi}(\sigma_{i}\tau) = U_{i}\left(\Ad\left(\emph{exp}(2\pi t_{i}\tau)\right)\cdot f^{i}_{\Phi}(\sigma_{i}\tau)\right)G_{i}^{-1},
\]
where $f^{i}_{\Phi}(\tau):\HH \to G$ is holomorphic and $\langle T_{i}\rangle$-invariant, and satisfies  
\begin{equation}\label{eq:parabolic1}
\lim_{\tau\to \tau_{i}} f^{i}_{\Phi}(\tau) = f_{i}\in P_{i}
\end{equation}
The set $\{f_{\Phi}\}$ of all such solutions is a torsor for the right action of  $G$. 
\item[(ii)] Conversely, given any function $f: \HH \to G$ as above, satisfying \eqref{eq:monodromy1}--\eqref{eq:parabolic1}, the function $\Phi = f_{\tau} f^{-1}$ belongs to $\mathfrak{S}_{2}(\Gamma, \Ad\rho)$, and is obviously invariant under the right $G$-action in $f$.
\end{itemize}
\end{lemma}
\begin{proof}
The existence of holomorphic solutions $f_{\Phi}:\HH\to G$ of the differential equation $f_{\tau}f^{-1}=\Phi$, regular at the cusps, is standard \cite{Malgrange84}. Because of the holomorphicity of $f_{\Phi}$ and the $\Ad\rho$-automorphicity of $\Phi$, a simple computation shows that the functions 
\[
\chi(\gamma,\tau)=f_{\Phi}(\gamma\tau)^{-1}\rho(\gamma)f_{\Phi}(\tau),
\]
are constant in $\tau$, and satisfy $\chi(\gamma_{1}\gamma_{2})=\chi(\gamma_{1})\chi(\gamma_{2})$. Hence
\[
f_{\Phi}(\gamma\tau)=\rho(\gamma)f_{\Phi}(\tau)\chi(\gamma)^{-1}, \qquad \forall\; \gamma\in\Ga,\;\; \tau\in\HH,
\]
for some $\chi:\Ga\to G$. Moreover, we have that $\chi(S_{i})\in\mathcal{O}_{i}$ and $\chi(T_{i})\in \mathcal{O}_{i}$, since it readily follows from the holomorphicity and regularity of $f_{\Phi}$ that
\[
\chi(S_{i})= f_{\Phi}(e_{i})^{-1}\rho(S_{i})f_{\Phi}(e_{i}),\quad  \chi(T_{j})= f_{\Phi}(p_{j})^{-1}\rho(T_{j})f_{\Phi}(p_{j}).
\]
Since $f_{\Phi}$ is holomorphic and regular at the cusps, it determines an isomorphism of sheaves $\mathbf{P}_{\chi}\cong\mathbf{P}_{\rho}$, implying the irreducibility of $\chi$.
Moreover, the group elements $f_{i}$ in \eqref{eq:parabolic1} are related to the constant Fourier coefficients $B_{i}(0) \in \mathfrak{n}_{i}$ in \eqref{eq:Fourier} as
\begin{equation}\label{eq:parabolic2}
B_{i}(0) = \left(I - \Ad(f_{i})\right)\cdot t_{i},
\end{equation}
and consequently, equation \eqref{eq:parabolic2} is nothing but a reformulation of the cusp form condition at each cusp $p_{i}$ in terms of the asymptotic behavior of the map $f_{\Phi}$. Therefore, it follows that $f_{i}\in P_{i}$.  
Finally, uniqueness of solutions is granted provided that a given value $f(\tau_{0})$ is fixed at some $\tau_{0}\in\HH$, and hence $f_{\Phi}$ is defined up to right multiplication by an element in $G$.  

The converse statement is immediate, in view of the reformulation of the cusp form condition in \eqref{eq:parabolic2}.
\end{proof}

\begin{remark} The right $G$-action on $f_{\Phi}$ induces an effective $PG=G/Z(G)$-action in $\chi$ by conjugation, determining a full class $\{\chi\}$ of equivalent representations.
Each function $f_{\Phi}$ induces a holomorphic equivalence of $(\Ga,G)$-principal bundles
which is invariant if $\rho$ (and consequently $\Phi$) is replaced by an equivalent representation, under conjugation in $U$. The residual right multiplication of $f_{\Phi}$
by elements in $Z(G)$ corresponds to the holomorphic automorphisms of $P_{\rho}$. Thus, lemma \ref{lemma:fundamental} determines a correspondence between the elements in $\mathfrak{S}_{2}(\Ga,\Ad\rho)$ and the equivalence classes of irreducible representations $\{\chi\}$ whose induced $(\Gamma,G)$-bundles satisfy  $P_{\chi} \cong P_{\rho}$.\footnote{\textbf{Note added in proof:} A result analogous to lemma \ref{lemma:fundamental} first appeared in the work of Florentino \cite{Flo01} (lemma 2), in connection to the study of the existence problem for the so-called Schottky uniformizations of vector bundles on Riemann surfaces. Although his result is described in the case of $G = \GL(r,\CC)$, and purely hyperbolic Fuchsian groups, the principle and ideas that make it work are essentially the same than the ones presented here. In particular, such result is used in \cite{Flo01} to characterize the set of all irreducible representations $\chi:\Gamma \to \GL(r,\CC)$ that induce a given equivalence class $\{E\to \Sigma\}$ of stable vector bundles on a compact Riemann surface $\Sigma$ uniformized by $\Gamma$ (lemma 3).}
\end{remark}

\begin{corollary}\label{cor:F}
For every pair $\left\{P_{\rho},\Phi\right\}$, $\Phi\in\mathfrak{S}_{2}(\Ga,\Ad\rho)$ (up to conjugation in $U$), there is a unique induced irreducible representation $\chi:\Ga\to G$ as in lemma \ref{lemma:fundamental2}, up to conjugation in $G$, such that $P_{\chi} \cong P_{\rho}$. Conversely, for any holomorphic map $f:\HH\to G$ satisfying \eqref{eq:monodromy1} and \eqref{eq:parabolic1} for some irreducible $\chi\in \Hom_{\mathbf{g}}^{0}(\Gamma, G)$, regular at each cusp, and inducing an isomorphism $P_{\chi}\cong P_{\rho}$, it follows that $f_{\tau}f^{-1}\in \mathfrak{S}_{2}(\Ga,\Ad\rho)$.
The map $\cF:T^{*}\cN^{s}\to \cK_{\CC}$, $\left\{P_{\rho},\Phi\right\}\mapsto \{\chi\}$ is 1-to-1.
\end{corollary}

\begin{proof}
Each map $f_{\Phi}$ is essentially an isomorphism of $(\Gamma, G)$-bundles for $P_{\rho}$ and $P_{\chi}$. 
If for $ \{P_{\rho},\Phi\}$,  $\{P_{\rho'},\Phi'\}$, we have that $\{\chi\}=\{\chi'\}$,  then after conjugation in $G$, $f_{\Phi'}(f_{\Phi})^{-1}$ is a regular $G$-valued $\rho'\otimes\rho^{\vee}$-automorphic function, hence a constant lying in $U$. Hence $\rho\cong \rho'$, or $\{\rho\}=\{\rho'\}$, which implies that $\Phi=\Phi'$ up to conjugation in $U$, or $\{\Phi\}=\{\Phi'\}$. 
\end{proof}

Let $\cN^{s}$ be the set of equivalence classes of $(\Ga, G)$-bundles on $\HH$ of the form $P_{\rho}$, for $\{\rho\}\in\cK^{s}$.  We wish to turn $\cN^{s}$ into a complex manifold, in such a way that the holomorphic tangent space at a point $\{P_{\rho}\}\in \cN^{s}$ is modeled by the spaces $\overline{\fS_{2}(\Ga,\Ad\rho)}\cong \fS_{2}(\Ga,\Ad\rho)^{\vee}$, for any representative $\rho$ (observe that such spaces are isomorphic under conjugation). In order to introduce complex coordinates in $\cN^{s}$, we need to provide a method of construction of local deformations of Cauchy-Riemann operators. The fundamental result providing a complex-analytic description of the deformation theory of $(\Ga,G)$-bundles is the following (cf. \cite{Malgrange84}).

\begin{lemma}\label{lemma:fundamental2} 
Let $\rho: \Ga\to U$ be an irreducible representation in $\cK^{s}$.  For every $\nu\in\overline{\mathfrak{S}_{2}(\Ga,\Ad\rho)}$ and $\epsilon\in\CC$ sufficiently close to 0, there is a function $f^{\epsilon\nu}:\HH\to G$ solving 
the differential equation $f^{-1}f_{\bar{\tau}}=\epsilon\nu$, 
satisfying
\begin{equation}\label{eq:monodromy2}
f^{\epsilon\nu}(\gamma\tau)=\rho^{\epsilon\nu}(\gamma)f^{\epsilon\nu}(\tau)\rho(\gamma)^{-1}
\end{equation}
for some irreducible representation $\rho^{\epsilon\nu}:\Ga\to U$ in $\cK^{s}$, real-analytic and regular at each cusp.
The set $\{f^{\epsilon\nu}\}$ of such solutions is a torsor for the left action of  $U$.
moreover, the conjugacy classes of parabolic and elliptic elements are preserved.
\end{lemma}
\begin{proof}
This is essentially the same argument as in \cite{TZ07}, p.124: existence of antiholomorphic solutions $f:\HH\to G$, regular at the cusps follows from lemma \ref{lemma:fundamental}; then the equation $f^{-1}f_{\bar{\tau}}=\epsilon\nu$ admits an antiholomorphic solution $f_{-}$, depending holomorphically in $\epsilon$. 
Consider any such $f_{-}$, and for any $\gamma\in\Ga$, consider the function $\chi_{\gamma}(\overline{\tau})=\left(f_{-}^{-1}\circ \gamma\right)\rho(\gamma)f_{-}$. Since $\nu$ is $\Ad\rho$-automorphic,
\[
\frac{\partial}{\partial\overline{\tau}}\chi_{\gamma}(\overline{\tau})=\epsilon\left(f_{-}^{-1}\circ\gamma\right)\rho(\gamma)\nu f_{-}-\epsilon \left(f_{-}^{-1}\circ\gamma\right)(\nu\circ\gamma)\overline{\gamma'}\rho(\gamma)f_{-}\equiv 0.
\]
Since $\chi_{\gamma}$ is antiholomorphic, it must be equal to a constant $\chi(\gamma)$, and clearly $\chi(\gamma_{1}\gamma_{2})=\chi(\gamma_{1})\chi(\gamma_{2})$. The function $f_{-}$ satisfies the functional equation $f_{-}\circ\gamma =\chi(\gamma)f_{-}\rho(\gamma)^{-1}$ $\forall \gamma\in\Ga, \tau\in\HH$, for some representation $\chi:\Ga\to G$, defined up to conjugation.
Following the main result in \cite{BS15}, and by the openness of stability, for sufficiently small $\epsilon$, the monodromy representation associated to $f_{-}$ defines a $(\Ga,G)$-bundle on $\HH$ which is equivalent to an irreducible unitary one, and consequently, there exists a holomorphic map $f_{+}:\HH\to G$, regular at each cusp, such that  
\[
f_{+}f_{-}=f^{\epsilon\nu}:\HH\to G
\]
is a solution of \eqref{eq:monodromy2}, depending real-analytically in $\tau$ and $\epsilon$, and satisfying 
$f^{\epsilon\nu}(\gamma\tau)=\rho^{\epsilon\nu}(\gamma)f^{\epsilon\nu}(\tau)\rho(\gamma)^{-1}$ for all $\gamma\in\Gamma,$ with $\rho^{\epsilon\nu}:\Ga\to U$ irreducible. Clearly, such $f^{\epsilon\nu}$ is defined up to left multiplication by an element in $U$. 

Finally, as in the proof of lemma \ref{lemma:fundamental}, the regularity of $f^{\epsilon\nu}$ at the cusps and elliptic fixed points implies that the conjugacy classes of such group elements are preserved under deformations. This concludes the proof.
\end{proof}

\begin{remark}
Observe that the collection of solutions $\{f^{\epsilon\nu}\}$ in lemma \ref{lemma:fundamental2} is a torsor for $PU$, and is thus an invariant of the classes $\{\rho\}$, $\{\rho^{\epsilon\nu}\}$.  Such solutions define a map of $(\Ga,G)$-bundles, and by pull-back, allow to express the Cauchy-Riemann operator of $P_{\rho^{\epsilon\nu}}$ over $P_{\rho}$ as
\[
\left(f^{\epsilon\nu}\right)^{-1}\circ\delb\circ f^{\epsilon\nu}=\delb +\epsilon\nu.
\]
This is our set of complex coordinates in $\cN^{s}$, analogous to Bers' coordinates on Teichm\"uller spaces. It can be verified \cite{TZ07} that such coordinates transform holomorphically, and thus turn $\cN^{s}$ into a complex manifold. Its complex dimension, which depends on the fixed set of group elements $\mathbf{g}=\{g_{i}\}_{i=1}^{m+n}$, is equal to $(g-1)\dim U +\sum_{i=1}^{m+n}\dim_{\CC}\cF^{i}$.
\end{remark}

\begin{corollary}\label{cor:coord}
Complex coordinates, analogous to the Bers' coordinates on Teichm\"uller spaces, can be introduced in $\cN^{s}$.  Given  $f^{\epsilon\nu}$ as in lemma \ref{lemma:fundamental2},
the holomorphic structure corresponding to $\rho^{\epsilon\nu}$ is realized in the $\Ga$-vector bundle over $\HH$ associated to the sheaf $\mathbf{E}_{\Ad\rho}$, by means of the $\bar{\partial}$-operator  $\left(\Ad f^{\epsilon\nu}\right)^{-1}\circ \bar{\partial}\circ \Ad f^{\epsilon\nu}$.
\end{corollary}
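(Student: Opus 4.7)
The plan is to realize the construction described in the preceding remark as a coordinate chart on $\cN^{s}$, and then to derive the $\bar{\partial}$-operator formula by functoriality of the associated bundle construction. For the chart at $\{P_{\rho}\}\in\cN^{s}$, absorbing $\epsilon$ into $\nu$, I would take as candidate chart the map $\nu\mapsto\{P_{\rho^{\nu}}\}$ from a small neighborhood of $0$ in $\overline{\fS_{2}(\Ga,\Ad\rho)}$ into $\cN^{s}$, where $\rho^{\nu}$ is as in lemma \ref{lemma:fundamental}. The first step is to verify that the differential at $\nu=0$ is injective: differentiating the equation $f^{-1}f_{\bar{\tau}}=\nu$ at $\nu=0$ shows that the Dolbeault representative in $H^{0,1}_{\delb}(\HH,\mathbf{E}_{\Ad\rho})$ of the resulting infinitesimal deformation of the holomorphic structure on $P_{\rho}$ is precisely the harmonic $(0,1)$-form $\nu$ itself. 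By the Hodge-Serre identification recalled earlier in the section, distinct $\nu$ then produce distinct first-order deformations, and since both spaces have the same complex dimension, the chart is a local diffeomorphism onto its image.

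For holomorphicity of the transition maps, I would follow the same reasoning as in \cite{TZ07}: the family $f^{\epsilon\nu}$ depends holomorphically on $\epsilon$ by standard elliptic theory applied to $f^{-1}f_{\bar{\tau}}=\epsilon\nu$, and this holomorphic dependence propagates to the change of Bers-type coordinates on overlapping charts. I expect the main technical obstacle to be keeping track of the $PU$-torsor ambiguity in the choice of representative $\rho$ within its class in $\cK^{s}$, together with the left-$U$-torsor ambiguity in $f^{\epsilon\nu}$ noted in lemma \ref{lemma:fundamental}, so as to ensure that the chart genuinely descends to a neighborhood of $\{P_{\rho}\}$ in $\cN^{s}$; this uses the properness and freeness of the $PU$-action on the relevant open set of irreducible representations, combined with the fact that any two choices of $f^{\epsilon\nu}$ differ by a constant element of $U$ and so produce the same class of bundle.

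Finally, for the formula itself, the intertwining relation of lemma \ref{lemma:fundamental} can be rewritten as $f^{\epsilon\nu}(\gamma\tau)^{-1}\rho^{\epsilon\nu}(\gamma)f^{\epsilon\nu}(\tau)=\rho(\gamma)$, so that the map $(\tau,g)\mapsto(\tau,f^{\epsilon\nu}(\tau)^{-1}g)$ is a $G$-equivariant isomorphism $\HH\times G\to\HH\times G$ intertwining the $\Ga$-action by $\rho^{\epsilon\nu}$ on the source with that by $\rho$ on the target, hence realizing an isomorphism of $(\Ga,G)$-bundles $P_{\rho^{\epsilon\nu}}\cong P_{\rho}$. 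Passing to the associated adjoint bundles and representing their sections as equivariant maps $\HH\to\mathfrak{g}$, this isomorphism sends a section $\phi'$ of the $\rho$-bundle to $\Ad f^{\epsilon\nu}\cdot\phi'$ as a section of the $\rho^{\epsilon\nu}$-bundle. The holomorphicity condition $\bar{\partial}(\Ad f^{\epsilon\nu}\cdot\phi')=0$ is then equivalent to $((\Ad f^{\epsilon\nu})^{-1}\circ\bar{\partial}\circ\Ad f^{\epsilon\nu})\phi'=0$, which yields the claimed description of the transported $\bar{\partial}$-operator.
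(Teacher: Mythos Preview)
Your approach is essentially that of the paper: in fact the paper gives no separate proof of this corollary, treating it as a restatement of the preceding remark, where the pull-back formula $\left(f^{\epsilon\nu}\right)^{-1}\circ\delb\circ f^{\epsilon\nu}=\delb+\epsilon\nu$ is recorded at the principal-bundle level and the holomorphicity of the transitions is delegated to \cite{TZ07}. Your write-up spells out the bundle isomorphism and the passage to the adjoint bundle explicitly, which the paper leaves implicit; this is a helpful addition and is carried out correctly.

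One point deserves correction. You assert that ``the family $f^{\epsilon\nu}$ depends holomorphically on $\epsilon$ by standard elliptic theory''. This is not what lemma~\ref{lemma:fundamental} gives: the antiholomorphic piece $f_{-}$ depends holomorphically on $\epsilon$, but the unitarizing factor $f_{+}$ does not, and the paper states explicitly that $f^{\epsilon\nu}$ is only \emph{real-analytic} in $\epsilon$ (lemma~\ref{lemma:vanishing} then computes both $\partial/\partial\epsilon$ and $\partial/\partial\bar\epsilon$ variations, confirming the dependence is not holomorphic). This does not invalidate the conclusion---the holomorphicity of the coordinate \emph{transitions} is a statement about how the harmonic projection of the pulled-back $(0,1)$-form varies, and that is what \cite{TZ07} establishes---but your stated reason for it is wrong, and you should appeal directly to the argument in \cite{TZ07} rather than to a holomorphic dependence of $f^{\epsilon\nu}$ that does not hold.
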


\begin{lemma}\label{lemma:vanishing}
Let $\nu\in\overline{\mathfrak{S}_{2}(\Ga,\Ad\rho)}$ and $\epsilon\in\CC$ sufficiently small. Then
\begin{equation} \label{h-first}
\left.\frac{\partial}{\partial\varepsilon}\left( (f^{\varepsilon\nu})^{-1}\overline{f^{\varepsilon\nu}}\right)\right|_{\varepsilon=0}
=\left.\frac{\partial}{\partial\bar{\varepsilon}}\left( (f^{\varepsilon\nu})^{-1}\overline{f^{\varepsilon\nu}}\right)\right|_{\varepsilon=0}=0
\end{equation}
where $g\mapsto \overline{g}$ is the involution in $G$ having $U$ as its set of fixed points.  
\end{lemma}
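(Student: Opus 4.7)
The plan is to reduce \eqref{h-first} to the single identity $\alpha = \overline{\beta}$, with $\alpha := \partial_\varepsilon f^{\varepsilon\nu}|_{0}$ and $\beta := \partial_{\bar\varepsilon} f^{\varepsilon\nu}|_{0}$, and then to establish it via the factorization $f^{\varepsilon\nu} = f_+ f_-$ from the proof of Lemma \ref{lemma:fundamental}, closing with the vanishing of $H^0$ for the adjoint sheaf $\mathbf{E}_{\Ad\rho}$ at a stable representation.

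Since $(f^{\varepsilon\nu})^{-1}\overline{f^{\varepsilon\nu}}|_{\varepsilon=0} = I$, a direct Wirtinger computation, relying on the identity $\partial_\varepsilon\overline{f^{\varepsilon\nu}}|_0 = \overline{\partial_{\bar\varepsilon}f^{\varepsilon\nu}|_0}$ (which holds because the Cartan involution on $G$ is complex antilinear on tangent vectors), gives
\[
\partial_\varepsilon\bigl((f^{\varepsilon\nu})^{-1}\overline{f^{\varepsilon\nu}}\bigr)\big|_0 = \overline{\beta} - \alpha,\qquad \partial_{\bar\varepsilon}\bigl((f^{\varepsilon\nu})^{-1}\overline{f^{\varepsilon\nu}}\bigr)\big|_0 = \overline{\alpha} - \beta,
\]
so \eqref{h-first} is equivalent to the single identity $\alpha = \overline{\beta}$ (its Cartan conjugate being $\beta = \overline{\alpha}$). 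Differentiating $f^{-1}f_{\bar\tau} = \varepsilon\nu$ at $\varepsilon = 0$ further yields $\bar\partial\alpha = \nu$ and $\bar\partial\beta = 0$, so $\beta$ is holomorphic in $\tau$.

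Using $f^{\varepsilon\nu} = f_+ f_-$, with $f_+$ holomorphic and $f_-$ antiholomorphic in $\tau$, and $f_-$ additionally holomorphic in $\varepsilon$, I would decompose $\alpha = \alpha_+ + \alpha_-$ and $\beta = \beta_+$, where $\alpha_\pm := \partial_\varepsilon f_\pm|_0$ and $\beta_+ := \partial_{\bar\varepsilon}f_+|_0$ (the contribution of $f_-$ to $\beta$ vanishes by holomorphy in $\varepsilon$). Differentiating the equivariance relations for $f_\pm$ shows that, with $z_\rho(\gamma) := \partial_\varepsilon\rho^{\varepsilon\nu}(\gamma)|_0\,\rho(\gamma)^{-1}$ and $z_\chi(\gamma) := \partial_\varepsilon\chi^{\varepsilon\nu}(\gamma)|_0\,\rho(\gamma)^{-1}$, the pieces $\alpha_+, \alpha_-, \beta_+$ carry the 1-cocycles $z_\rho - z_\chi$, $z_\chi$, and $\overline{z_\rho}$ respectively; the last uses both that $\chi^{\varepsilon\nu}$ is holomorphic in $\varepsilon$ (hence $\partial_{\bar\varepsilon}\chi^{\varepsilon\nu}|_0 = 0$) and that $\partial_{\bar\varepsilon}\rho^{\varepsilon\nu}|_0 = \overline{\partial_\varepsilon\rho^{\varepsilon\nu}|_0}$ for smooth maps $\CC \to U$. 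Applying the Cartan involution, $\overline{\beta_+}$ carries cocycle $z_\rho$, so the combination $\alpha - \overline{\beta_+} = \alpha_+ + (\alpha_- - \overline{\beta_+})$ has trivial cocycle and is therefore genuinely $\Ad\rho$-equivariant.

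Since $\rho^{\varepsilon\nu}$ and $\chi^{\varepsilon\nu}$ are $PG$-conjugate for small $\varepsilon$ by Lemma \ref{lemma:fundamental}, the difference $z_\rho - z_\chi$ is a 1-coboundary $(\Ad\rho - I)Y$ for some $Y \in \mathfrak{g}$. The shifted maps $\alpha_+ + Y$ (holomorphic in $\tau$) and $(\alpha_- - \overline{\beta_+}) - Y$ (antiholomorphic in $\tau$) then become honestly $\Ad\rho$-equivariant, and inherit regularity at the cusps and elliptic fixed points from $f^{\varepsilon\nu}$. Consequently $\alpha_+ + Y$, and the Cartan conjugate of $(\alpha_- - \overline{\beta_+}) - Y$, each defines a global holomorphic section of $\mathbf{E}_{\Ad\rho}$ over $S = \Ga\setminus\HH^*$. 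The smoothness hypothesis $\mathfrak{g}^{\Ad\rho} = 0$ at $[\rho] \in \cK^s$ forces $H^0(S,\mathbf{E}_{\Ad\rho}) = 0$ through the stable parahoric-bundle correspondence of \cite{BS15}, so both sections vanish; summing then yields $\alpha - \overline{\beta_+} = (\alpha_+ + Y) + ((\alpha_- - \overline{\beta_+}) - Y) = 0$, proving $\alpha = \overline{\beta}$, and \eqref{h-first} follows. The main obstacle lies in this final vanishing step: one must verify carefully that the shifted equivariant functions extend to genuine sections of the parahoric adjoint bundle with the correct parabolic and elliptic weights, so that the $H^0$-vanishing theorem of \cite{BS15, Men14} for stable parahoric bundles genuinely applies.
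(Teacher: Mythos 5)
Your reduction of \eqref{h-first} to the single identity $\alpha=\overline{\beta}$, the formulas $\bar\partial_{\tau}\alpha=\nu$ and $\bar\partial_{\tau}\beta=0$, and the cocycle bookkeeping for $\alpha_{+},\alpha_{-},\beta_{+}$ are all correct (up to the harmless normalization $f^{\varepsilon\nu}|_{\varepsilon=0}=I$). The genuine gap is the step ``since $\rho^{\varepsilon\nu}$ and $\chi^{\varepsilon\nu}$ are $PG$-conjugate for small $\varepsilon$ by Lemma \ref{lemma:fundamental}, the difference $z_{\rho}-z_{\chi}$ is a $1$-coboundary.'' Lemma \ref{lemma:fundamental} does not give conjugacy of the two representations: what it produces is a holomorphic map $f_{+}:\HH\to G$ with $f_{+}(\gamma\tau)=\rho^{\varepsilon\nu}(\gamma)f_{+}(\tau)\chi^{\varepsilon\nu}(\gamma)^{-1}$, i.e.\ an equivalence of the induced $(\Ga,G)$-bundles, which is strictly weaker; in general $\chi^{\varepsilon\nu}$ is not conjugate into $U$ at all (were it conjugate to $\rho^{\varepsilon\nu}$, the fibers of the map $\cF$ of corollary \ref{cor:F} would collapse). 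Worse, the coboundary property you invoke is essentially equivalent to what is being proved: $\alpha_{+}$ is a holomorphic Eichler-type integral with period cocycle $z_{\rho}-z_{\chi}$, and (given the very $H^{0}$-vanishing you use) that cocycle is a coboundary precisely when $\alpha_{+}=\partial_{\varepsilon}f_{+}|_{0}$ is constant, which is a consequence of the identity $\alpha=\overline{\beta}$ you are trying to establish. So, as written, the argument is circular at its key point.

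The detour is also unnecessary, and removing it recovers the paper's proof. You already observed that the cocycles of $\alpha_{+}$, $\alpha_{-}$ and $\overline{\beta_{+}}$ cancel identically, so $\alpha-\overline{\beta}=\alpha_{+}+\alpha_{-}-\overline{\beta_{+}}$ is genuinely $\Ad\rho$-equivariant with no shift by $Y$ needed; moreover it is the sum of a function holomorphic in $\tau$ and a function antiholomorphic in $\tau$, hence harmonic, and it is regular at the cusps and elliptic fixed points. A harmonic, $\Ad\rho$-equivariant, regular $\mathfrak{g}$-valued function is constant, and the constant lies in $\mathfrak{g}^{\Ad\rho(\Ga)}$, which vanishes since $\rho$ is irreducible without inner automorphisms and $G$ is simple; no splitting into separately equivariant holomorphic and antiholomorphic pieces, and no parahoric weight verification, is required. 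This yields $\alpha=\overline{\beta}$ directly, and is exactly the paper's one-line argument: the derivatives in \eqref{h-first} are harmonic equivariant sections attached to $\mathbf{E}_{\Ad\rho}$, hence central, hence zero.
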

\begin{proof}
Since $f^{\epsilon\nu}$ is defined as a product of a holomorphic and an antiholomorphic function, it readily follows that the functions 
\[
\left.\frac{\partial}{\partial\varepsilon}\left( (f^{\varepsilon\nu})^{-1}\overline{f^{\varepsilon\nu}}\right)\right|_{\varepsilon=0},\qquad 
\left.\frac{\partial}{\partial\bar{\varepsilon}}\left( (f^{\varepsilon\nu})^{-1}\overline{f^{\varepsilon\nu}}\right)\right|_{\varepsilon=0},
\]
are harmonic, and correspond to harmonic sections of the sheaf $\mathbf{E}_{\Ad\rho}$, thus equal to an element in the center of $\mathfrak{g}$, which is necessarily zero since $G$ is simple.
\end{proof}

Our previous construction determines a tautological real-analytic isomorphism $\cN^{s}\cong\cK^{s}$, and consequently, a real-analytic inclusion $\cN^{s}\hookrightarrow\cK_{\CC}$. 
As we will see, the complex-analytic mechanism that we have implemented to deform a given $(\Ga,G)$-principal bundle $P_{\rho}$ can be used to extend the previous isomorphism to a real-analytic monomorphism $\cF:T^{*}\cN^{s}\hookrightarrow \cK_{\CC}$. However, the map $\cF$ that we will construct does not correspond, in general, to the monodromy of (restricted) stable pairs \cite{Hitchin87a, Hitchin87b}. The simplicity of the construction of the extension of $\cF$ that we will provide is an indicator of its naturality: the fibers over a given $\{P_{\rho}\}\in \cN^{s}$ are put in 1-to-1 correspondence with the classes of representations $\{\chi\}\in\cK_{\CC}$ inducing an equivalent  $(\Ga,G)$-bundle over $\HH$.

Let us consider the holomorphic cotangent bundle $T^{*}\cN^{s}$, whose points would be denoted as $\{P_{\rho},\Phi\}$. It follows that the tangent space at a point $\{P_{\rho},0\}\in T^{*}\cN$ in the zero section is isomorphic to the direct sum
\[
H^{1}\left(\HH,\mathbf{E}_{\Ad\rho}\right)^{\vee}\oplus H^{1}\left(\HH,\mathbf{E}_{\Ad\rho}\right)\cong \mathfrak{S}_{2}(\Ga,\Ad\rho)\oplus\overline{\mathfrak{S}_{2}(\Ga,\Ad\rho)}.
\]



In the next section, we will show that the differential of the map $\cF$ is indeed a bijection at any point.

\section{Shimura's isomorphism}\label{section:Shimura}

\begin{definition}
An \emph{Eichler integral of weight 0 with the representation} $\Ad\rho$, for $\rho:\Ga\to U$, is a holomorphic function $\cE:\HH\to\mathfrak{g}$ satisfying 
\[
\cE(\gamma\tau)=\Ad\rho(\gamma)\cdot\cE(\tau)+z(\gamma),\quad \gamma\in\Ga,\,\tau\in\HH,
\]
where $z:\Ga\to\mathfrak{g}$ is an element in $Z_{P}^{1}(\Ga,\mathfrak{g}_{\Ad\rho})$, and moreover, such that $\cE'$ is a $\mathfrak{g}$-valued automorphic form of weight 2 for $\Ad\rho$, which is meromorphic at each parabolic fixed point of $\Gamma$. 
\end{definition}

It follows that for every cusp form $\Phi$ of weight 2 with a representation $\Ad\rho$, any choice $\cE_{\Phi}$ of antiderivative of it is an Eichler integral of weight 0,
\[
\cE_{\Phi}(\gamma\tau)=\Ad\rho(\gamma)\cdot \cE_{\Phi}(\tau)+z_{\Phi}(\gamma),\quad \gamma\in\Ga,\,\tau\in\HH.
\]
Since $z_{\Phi}:\Ga\to \mathfrak{g}$ is a $\mathfrak{g}$-vaued parabolic 1-cocycle with the representation $\Ad\rho$, then, in particular, $\re (z_{\Phi})\in Z^{1}_{P}(\Ga,\mathfrak{u}_{\Ad\rho})$. It can be verified that, if a different choice of antiderivative for $\Phi$ is made, the difference between the corresponding $\mathfrak{u}$-valued 1-cocycles is in fact a coboundary. Therefore, it is possible to associate a well-defined cohomology class in $H^{1}_{P}\left(\Gamma,\mathfrak{u}_{\Ad\rho}\right)$ to every cusp form of weight 2 with the representation $\Ad\rho$. \emph{Shimura's isomorphism} \cite{Shimura71} 
is the statement of the fact that the previous map is an isomorphism, that is, 
\[
\mathfrak{S}_{2}(\Ga,\Ad\rho)\cong H^{1}_{P}(\Ga,\mathfrak{u}_{\Ad\rho}), 
\]
where the isomorphism is given explicitly by 
\[
\cL_{S}(\Phi)=\text{the cohomology class of}\;\, \re (z_{\Phi}).
\]

Shimura's isomorphism is an isomorphism of real vector spaces. Since  $\mathfrak{g}\cong\mathfrak{u}_{\CC}$ implies the canonical identification
$H^{1}_{P}(\Ga,\mathfrak{g}_{\Ad\rho})\cong H^{1}_{P}(\Ga,\mathfrak{u}_{\Ad\rho})_{\CC}$, and recalling that for a complex vector space $V$, $(V_{\RR})_{\CC}\cong V\oplus\overline{V}$, it is also possible to formulate Shimura's isomorphism as an isomorphism of complex vector spaces, in the form
\[
\mathfrak{S}_{2}(\Ga,\Ad\rho)\oplus\overline{\mathfrak{S}_{2}(\Ga,\Ad\rho)}\cong H^{1}_{P}(\Ga,\mathfrak{g}_{\Ad\rho}).
\]
For fixed $\Phi\in\mathfrak{S}_{2}(\Ga,\Ad\rho)$ and $\nu\in\overline{\mathfrak{S}_{2}(\Ga,\Ad\rho)}$, let us denote 
\[
\dot{f}_{+}=\left.\frac{\partial f_{\epsilon\Phi}}{\partial \epsilon}\right|_{\epsilon=0},\qquad \dot{f}^{-}=\left.\frac{\partial f^{\epsilon\nu}}{\partial \bar{\epsilon}}\right|_{\epsilon=0}.
\]

\begin{lemma}\label{lemma:var}
For any $\Phi\in\mathfrak{S}_{2}(\Ga,\Ad\rho)$, $\nu\in\overline{\mathfrak{S}_{2}(\Ga,\Ad\rho)}$ and $\epsilon\in\CC$ sufficiently small, the functions  
$\dot{f}_{+}$ and $\dot{f}^{-}$ are Eichler integrals of weight 0, satisfying
\begin{equation} \label{var-formulas}
(\dot{f}_{+})'=\Phi,\qquad (\dot{f}^{-})'=\overline{\nu}.
\end{equation}
\end{lemma}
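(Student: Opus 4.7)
The plan is to exploit the ODEs and automorphy relations from lemmas \ref{lemma:fundamental} and \ref{lemma:fundamental2}, handling the two cases in parallel. I first fix the normalizations $f_{0}=f^{0}=I$, granted by the right (resp. left) torsor freedom of lemmas \ref{lemma:fundamental2} and \ref{lemma:fundamental}; this forces $\chi_{0}=\rho^{0}=\rho$ once the automorphy relations \eqref{eq:monodromy1} and \eqref{eq:monodromy2} are evaluated at $\epsilon=0$. For the first formula in \eqref{var-formulas}, differentiating $(f_{\epsilon\Phi})_{\tau}(f_{\epsilon\Phi})^{-1}=\epsilon\Phi$ in $\epsilon$ at $\epsilon=0$ gives $(\dot{f}_{+})_{\tau}=\Phi$ at once. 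The second one is subtler: differentiating $(f^{\epsilon\nu})^{-1}(f^{\epsilon\nu})_{\bar\tau}=\epsilon\nu$ in $\epsilon$ (not in $\bar\epsilon$) at $\epsilon=0$ yields $\partial_{\bar\tau}\bigl(\partial_{\epsilon}f^{\epsilon\nu}|_{\epsilon=0}\bigr)=\nu$, while lemma \ref{lemma:vanishing}, expanded at the identity, provides the identification $\partial_{\epsilon}f^{\epsilon\nu}|_{\epsilon=0}=\overline{\partial_{\bar\epsilon}f^{\epsilon\nu}|_{\epsilon=0}}=\overline{\dot{f}^{-}}$, and conjugation yields $(\dot{f}^{-})_{\tau}=\overline{\nu}$. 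Holomorphicity in $\tau$ of $\dot{f}_{+}$ is inherited from that of $f_{\epsilon\Phi}$, while that of $\dot{f}^{-}$ follows from differentiating the $\bar\partial$-ODE for $f^{\epsilon\nu}$ in $\bar\epsilon$ at $\epsilon=0$.

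For the Eichler integral property, I differentiate the automorphy relations \eqref{eq:monodromy1} and \eqref{eq:monodromy2} in $\epsilon$ and $\bar\epsilon$ at $\epsilon=0$, respectively. Right-translating the resulting tangent vectors $\dot{\chi}(\gamma)\eqdef\partial_{\epsilon}\chi_{\epsilon}(\gamma)|_{0},\;\dot{\rho}^{-}(\gamma)\eqdef\partial_{\bar\epsilon}\rho^{\epsilon\nu}(\gamma)|_{0}\in T_{\rho(\gamma)}G$ to $\mathfrak{g}$ via $z_{\Phi}(\gamma)\eqdef\dot{\chi}(\gamma)\rho(\gamma)^{-1}$ and $z_{\nu}(\gamma)\eqdef\dot{\rho}^{-}(\gamma)\rho(\gamma)^{-1}$, and using $f_{0}=f^{0}=I$, the direct calculation delivers
\[
\dot{f}_{+}(\gamma\tau)=\Ad\rho(\gamma)\cdot\dot{f}_{+}(\tau)-z_{\Phi}(\gamma),\qquad \dot{f}^{-}(\gamma\tau)=\Ad\rho(\gamma)\cdot\dot{f}^{-}(\tau)+z_{\nu}(\gamma),
\]
exhibiting $\dot{f}_{+}$ and $\dot{f}^{-}$ as Eichler integrals of weight $0$ with cocycles $-z_{\Phi}$ and $z_{\nu}$, respectively. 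The 1-cocycle identity for $z_{\Phi}$ (resp. $z_{\nu}$) is read off by differentiating the homomorphism relation $\chi_{\epsilon}(\gamma_{1}\gamma_{2})=\chi_{\epsilon}(\gamma_{1})\chi_{\epsilon}(\gamma_{2})$ (resp. the analogous one for $\rho^{\epsilon\nu}$) at $\epsilon=0$ and right-translating.

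To verify the parabolic cocycle condition, I use the regularity of $f_{\epsilon\Phi}, f^{\epsilon\nu}$ at the elliptic fixed points $e_{i}$ and cusps $p_{j}$ to extract, from the automorphy relations evaluated at these points, the conjugacy formulas $\chi_{\epsilon}(S_{i})=f_{\epsilon\Phi}(e_{i})^{-1}\rho(S_{i})f_{\epsilon\Phi}(e_{i})$ and $\rho^{\epsilon\nu}(S_{i})=f^{\epsilon\nu}(e_{i})\rho(S_{i})f^{\epsilon\nu}(e_{i})^{-1}$ (already observed in the proofs of lemmas \ref{lemma:fundamental} and \ref{lemma:fundamental2}), together with analogues at $T_{j}, p_{j}$. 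Differentiating at $\epsilon=0$ and right-translating gives $z_{\Phi}(S_{i})=(\Ad\rho(S_{i})-I)\cdot\dot{f}_{+}(e_{i})$ and $z_{\nu}(S_{i})=-(\Ad\rho(S_{i})-I)\cdot\dot{f}^{-}(e_{i})$, both in $(\Ad\rho(S_{i})-I)\mathfrak{g}$, with the cusp case identical. The main obstacle is the $\dot{f}^{-}$ variational formula, since the underlying $\bar\partial$-ODE constrains $f^{\epsilon\nu}$ only through its $\bar\tau$-derivative, whereas the target statement $(\dot{f}^{-})'=\overline{\nu}$ is an identity of $\tau$-derivatives; lemma \ref{lemma:vanishing} is precisely the bridge between the $\epsilon$- and $\bar\epsilon$-variations needed to effect this transfer.
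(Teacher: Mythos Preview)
Your argument is correct and follows essentially the same route as the paper's proof: both use the defining ODEs together with lemma~\ref{lemma:vanishing} as the bridge identifying $\partial_{\epsilon}f^{\epsilon\nu}|_{0}$ with $\overline{\dot{f}^{-}}$, which is exactly how the paper obtains $(\dot{f}^{-})'=\overline{\nu}$. Your treatment is somewhat more explicit than the paper's---you spell out the normalization $f_{0}=f^{0}=I$, the cocycle identity, and the parabolic condition at elliptic and cusp points---whereas the paper leaves these implicit in the phrase ``automatic from \eqref{eq:monodromy1} and the holomorphicity of $f_{\epsilon\Phi}$'' and absorbs the rest into the definition of Eichler integral; but the substance is the same.
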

\begin{proof}
That $\dot{f}_{+}$ is an antiderivative of $\Phi$ is automatic from \eqref{eq:monodromy1} and the holomorphicity of $f_{\epsilon\Phi}$.
The argument for $\dot{f}^{-}$ is only slightly more elaborate: first of all, it follows from \eqref{eq:monodromy2} that $(\dot{f}^{-})_{\bar{\tau}}=0$, so $\dot{f}^{-}$ is holomorphic.
Now, it follows from lemma \ref{lemma:vanishing} that 
\[
\overline{\left.\frac{\partial f^{\epsilon\nu}}{\partial \epsilon}\right|_{\epsilon=0}}=\left.\frac{\partial f^{\epsilon\nu}}{\partial \bar{\epsilon}}\right|_{\epsilon=0}
\]
but the left-hand side is an antiderivative for $\overline{\nu}$ as a consequence of \eqref{eq:monodromy2}, and the claim follows.
\end{proof}

\begin{proof}[\textbf{Proof of theorem 1}]
We have seen in section \ref{section:deformation} that the tangent space at any point $\{P_{\rho},0\}\in T^{*}\cN^{s}$ is modeled by the space $\mathfrak{S}_{2}(\Ga,\Ad\rho)\oplus\overline{\mathfrak{S}_{2}(\Ga,\Ad\rho)}$. In turn, the tangent space at $\{\rho\}=\cF(\{P_{\rho},0\})\in \cK_{\CC}$ is $H^{1}_{P}(\Ga,\mathfrak{g}_{\Ad\rho})$.

The isomorphism $(V_{\RR})_{\CC}\cong V\oplus\overline{V}$ is determined by the $\sqrt{-1}$ and $-\sqrt{-1}$-eigenspaces of the map $(v,w)\mapsto (-w,v)$, $v,w\in V$, which are complex subspaces of $(V_{\RR})_{\CC}$ spanned, respectively, by elements of the form $(v,-\sqrt{-1}v)$ and $(v,\sqrt{-1}v)$. Under this identification, the isomorphism 
\[
\cL_{S}:\mathfrak{S}_{2}(\Ga,\Ad\rho)\oplus\overline{\mathfrak{S}_{2}(\Ga,\Ad\rho)}\to H^{1}_{P}(\Ga,\mathfrak{g}_{\Ad\rho})
\]
is given explicitly as 
\begin{equation}\label{eq:Shimura iso}
\cL_{S}(\Psi,\nu)= [z_{\Psi}+\overline{z_{\overline{\nu}}}],
\end{equation}
where $[\cdot]$ denotes the cohomology class of an element in $Z^{1}(\Ga,\mathfrak{g}_{\Ad\rho})$.


Choose an arbitrary representative $\rho$ in its equivalence class, and let $\epsilon$ be sufficiently small, and $\chi_{\epsilon\Psi}$ (resp. $\{\rho^{\epsilon\nu}\}$) the associated irreducible representation in $\cK_{\CC}$ (resp. $\cK^{s}$). Then, the maps $\{\epsilon\Psi\}\mapsto \{\chi_{\epsilon\Psi}\}$, $\{\epsilon\nu\}\mapsto \{\rho^{\epsilon\nu}\}$ defined in corollaries \ref{cor:F} and \ref{cor:coord} yield 
\[
d\mathscr{F}_{\{P_{\rho},0\}}(\Psi)=-\left.\frac{\partial}{\partial\epsilon}(\chi_{\epsilon\Psi})\right|_{\epsilon=0}\rho^{-1},\qquad d\mathscr{F}_{\{P_{\rho},0\}}(\nu)=\left.\frac{\partial}{\partial\epsilon}(\rho^{\epsilon\nu})\right|_{\epsilon=0}\rho^{-1},
\]
as $\mathfrak{g}$-valued parabolic 1-cocycles. 
On the other hand,  lemma \ref{lemma:var} states that $\dot{f}_{+}$ is an  Eichler integral of weight 0 with the representation $\Ad\rho$, satisfying $(\dot{f}_{+})'=\Psi$. Its associated parabolic 1-cocycle is seen to be 
\[
z_{+}(\gamma)=-\left.\frac{\partial}{\partial\epsilon}(\chi_{\epsilon\Psi}(\gamma))\right|_{\epsilon=0}\rho(\gamma)^{-1}.
\]
In turn, $\dot{f}^{-}$ is also an Eichler integral of weight 0, satisfying  $(\dot{f}^{-})'=\overline{\nu}$. From \eqref{eq:monodromy2}  and \eqref{h-first}, its associated parabolic 1-cocycle is 
\[
z^{-}(\gamma)=\left.\frac{\partial}{\partial\bar{\epsilon}}(\rho^{\epsilon\nu}(\gamma))\right|_{\epsilon=0}\rho(\gamma)^{-1}
=\overline{\left.\frac{\partial}{\partial\epsilon}(\rho^{\epsilon\nu}(\gamma))\right|_{\epsilon=0}\rho(\gamma)^{-1}},
\]
where the last equality follows from the algebraic identity $\overline{\rho^{\epsilon\nu}} = \rho^{\epsilon\nu}$ by direct computation.

Finally, since the conjugation of $\rho$ induces a trivial change in the parabolic 1-cocycles, leaving their cohomology class fixed, we conclude
\[
d\mathscr{F}_{\{P_{\rho},0\}}(\Psi,0)=\cL_{S}(\Psi,0),\qquad d\mathscr{F}_{\{P_{\rho},0\}}(0,\nu)=\cL_{S}(0,\nu),
\]
which completes the proof.
\end{proof} 

In particular, theorem \ref{main theo} ensures that the restriction of $\cF$ to the image of $s_{0}$ is a real-analytic diffeomorphism onto $\cK^{s}$.  Let $\pi :T^{*}\cN^{s}\to \cN^{s}$ be the cotangent bundle projection. In general, the vertical subbundle $V\to T^{*}\cN^{s}$ of $T\left(T^{*}\cN^{s}\right)$, defined as $\ker\left(d\pi\right)$, is canonically isomorphic to $\pi^{*}\left(T^{*}\cN^{s}\right)$, and there is an induced exact sequence of vector bundles over $T^{*}\cN^{s}$,
\[
0 \to \pi^{*}\left(T^{*}\cN^{s}\right)\to T\left(T^{*}\cN^{s}\right)\to \pi^{*}\left(T\cN^{s}\right) \to 0,
\]
even though for an arbitrary $\Phi\in \mathfrak{S}_{2}(\Ga,\Ad\rho)$, there is no canonical splitting of $T_{\{P_{\rho},\Phi\}}\left(T^{*}\cN^{s}\right)$, and consequently no natural generalization of the result in the complement of $s_{0}$ on $T^{*}\cN^{s}$.  Nevertheless, it readily follows that, indeed, the differential of $\cF$ is a bijection over any point $\{P_{\rho},\Phi\}\in T^{*}\cN^{s}$. To see this, consider the map $\mathrm{pr}:\cF(T^{*}\cN^{s})\subset \cK_{\CC}\to \cK^{s}$, defined by requiring
\[
\cF_{U}\circ \pi =\mathrm{pr}\circ \cF,
\]
where $\cF_{U}=\cF\circ s_{0}:\cN^{s}\to\cK^{s}$. 
We know that $d\cF_{U}$ is an isomorphism at any point in $\cN^{s}$, and 
consequently, $d\left(\cF_{U}\circ \pi \right)$ is surjective at any $\{P_{\rho},\Phi\}\in T^{*}\cN^{s}$, and therefore, the same is true for $d\left(\mathrm{pr}\circ \cF\right)$. From this, it follows that the kernel of $d\cF$ at a point $\{P_{\rho},\Phi\}\in T^{*}\cN^{s}$ must be a subspace of $V_{\{P_{\rho},\Phi\}}=\ker \left(d\pi_{\{P_{\rho},\Phi\}}\right)$. On the other hand, the isomorphism $V\cong \pi^{*}\left(T^{*}\cN^{s}\right)$ implies that $d\cF |_{V}$ is given at any point $\{P_{\rho},\Phi\}$ simply as 
\[
\Psi \mapsto \cL_{S}(\Psi,0). 
\]
Hence $d\cF |_{V_{\{P_{\rho},\Phi\}}}$ has maximal rank, and we have concluded the following corollary.


\begin{corollary}
The map 
\[
\cF:T^{*}\cN^{s}\to\cK_{\CC},
\] 
is a  real-analytic monomorphism. Its restriction to the zero section is in correspondence with the inclusion $\iota:\cK^{s}\hookrightarrow \cK_{\CC}$. The image $\cF\left(T^{*}\cN^{s}\right)=\cK_{\CC}^{s}$ is the Zariski open set in $\cK_{\CC}$ of equivalence classes of $\chi$ whose associated $(\Ga,G)$-bundle is isomorphic to an irreducible unitary one, without nontrivial automorphisms.
There is a foliation of $\cK_{\CC}^{s}$, parametrized by $\cK^{s}$, whose leaves are given by the images of each $T^{*}_{\{P_{\rho}\}}\cN^{s}\cong\mathfrak{S}_{2}(\Ga,\Ad\rho)$ under $\cF$.
\end{corollary}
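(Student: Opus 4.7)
The plan is to assemble the corollary from three ingredients that have been individually established earlier in the section: injectivity of $\cF$ (corollary \ref{cor:F}), pointwise bijectivity of $d\cF$ (the paragraph immediately preceding the statement, combining theorem \ref{main theo} with the vertical-subbundle analysis), and the real-analytic character of the constructions in lemmas \ref{lemma:fundamental} and \ref{lemma:fundamental2}. Together these promote $\cF$ to a real-analytic monomorphism, and it remains only to identify the image and exhibit the foliation.

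For real-analyticity, I would observe that the solutions $f_{\Phi}$ and $f^{\epsilon\nu}$ of the Malgrange-type equations in lemmas \ref{lemma:fundamental} and \ref{lemma:fundamental2} depend real-analytically on their data by the standard parameter dependence for such ODEs; evaluating the functional equations $f(\gamma\tau)=\chi_{1}(\gamma)f(\tau)\chi_{2}(\gamma)^{-1}$ at any fixed base point $\tau_{0}\in\HH$ extracts the induced representations as real-analytic functions of the Bers-type coordinates of corollary \ref{cor:coord} on $\cN^{s}$ and of $\Phi\in T^{*}_{\{P_{\rho}\}}\cN^{s}$. Combined with the injectivity of corollary \ref{cor:F} and the pointwise bijectivity of $d\cF$, this yields the monomorphism property. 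The identification of $\cF\circ s_{0}$ with the inclusion $\iota:\cK^{s}\hookrightarrow \cK_{\CC}$ is immediate, because at $\Phi=0$ a solution of $f_{\tau}f^{-1}=0$ is constant, and so $\chi=\rho$ as abstract representations up to conjugation.

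For the image identification, the containment $\cF(T^{*}\cN^{s})\subset \cK_{\CC}^{s}$ is built into the construction: the map $f_{\Phi}$ explicitly exhibits a holomorphic equivalence $P_{\chi}\cong P_{\rho}$ in which $\rho$ is irreducible unitary and, by assumption on $\{P_{\rho}\}\in\cN^{s}$, has no nontrivial automorphisms. Conversely, given $\{\chi\}\in\cK_{\CC}^{s}$, the defining hypothesis furnishes an irreducible unitary $\rho$ without nontrivial automorphisms together with a holomorphic equivalence of $(\Ga,G)$-bundles, represented by a holomorphic map $f:\HH\to G$ regular at the cusps and satisfying the intertwining relation of lemma \ref{lemma:fundamental2}; setting $\Phi=f_{\tau}f^{-1}$ produces a $\mathfrak{g}$-valued weight-2 cusp form for $\Ad\rho$, with $\{\chi\}=\cF(\{P_{\rho},\Phi\})$. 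The foliation statement then follows tautologically: the projection $\mathrm{pr}:\cK_{\CC}^{s}\to \cK^{s}$ defined by $\cF_{U}\circ \pi=\mathrm{pr}\circ \cF$ has, by construction, fibers equal to the images $\cF(T^{*}_{\{P_{\rho}\}}\cN^{s})$, and the leaves are tangent everywhere to the vertical subbundle.

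The main obstacle is the converse inclusion $\cK_{\CC}^{s}\subset \cF(T^{*}\cN^{s})$: concretely, that the holomorphic map $f$ produced by the hypothesized bundle equivalence defines, through $\Phi=f_{\tau}f^{-1}$, a genuine cusp form, including the vanishing condition \eqref{limit} at every cusp. This rests on the regularity of $f$ at the cusps supplied by the parahoric framework of Balaji-Seshadri \cite{BS15}, together with the matching of parabolic conjugacy classes of $\chi$ and $\rho$ enforced by the definition of $\Hom_{\mathbf{t}}(\Ga,G)$; the decay of $\Phi$ in the local parabolic coordinate $q_{i}$ is then a direct consequence of the form of the automorphy factor in \eqref{can-cusp}.
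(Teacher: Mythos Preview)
Your proposal is correct and matches the paper's approach: the paper itself offers no separate proof for this corollary, treating it as an immediate consequence of the preceding paragraph (injectivity from corollary~\ref{cor:F}, pointwise bijectivity of $d\cF$ via the vertical-subbundle argument, and real-analyticity of the constructions in lemmas~\ref{lemma:fundamental} and~\ref{lemma:fundamental2}). Your discussion of the converse inclusion $\cK_{\CC}^{s}\subset\cF(T^{*}\cN^{s})$ and the cusp-vanishing obstacle actually supplies more detail than the paper does, but it is entirely consistent with the paper's framework and fills a gap the author leaves implicit.
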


\begin{remark}
Altogether, the real isomorphisms $\mathfrak{S}_{2}(\Ga,\Ad\rho)\cong H^{1}_{P}(\Ga,\mathfrak{u}_{\Ad\rho})$ turn the smooth locus of the real character variety $\cK$ into an almost complex manifold. Theorem \ref{main theo}, restricted to $\cN^{s}$, shows indirectly that this almost complex structure on  $\cK^{s}$ would be integrable (in the same spirit of \cite{NS64}), thus determining a complex structure over $\cK^{s}$. 
\end{remark}

\begin{remark}
The Petersson inner product in each $\overline{\mathfrak{S}_{2}(\Ga,\Ad\rho)}$ induces a K\"ahler stucture in $\cN^{s}$, which in terms of holomorphic normal coordinates, is given as
\[
\Omega\left(\frac{\partial}{\partial\epsilon(\nu_{1})},\frac{\partial}{\overline{\partial\epsilon(\nu_{2})}}\right)=\frac{\sqrt{-1}}{2}\langle\nu_{1},\nu_{2}\rangle_{P}.
\]
Its imaginary part is a closed, real 2-form defining a symplectic structure in $\cN^{s}$. 

Since cup product in parabolic cohomology corresponds to wedge product (defined in terms of the Killing form) in the space of cohomology classes of $\Ad\rho$-automorphic 1-forms on $\HH$ with values in $\mathfrak{u}$ and compactly supported within a fundamental region of $\Ga$, a space in turn isomorphic to $H^{1}(\HH,\mathbf{E}_{\Ad\rho})$ (cf. \cite{NS64}, Proposition 4.4), Shimura's isomorphism is an isomorphism of symplectic vector spaces, for the symplectic form on $H^{1}_{P}(\Ga,\mathfrak{u}_{\Ad\rho})$ given as
\[
H^{1}_{P}(\Ga,\mathfrak{u}_{\Ad\rho})\times H^{1}_{P}(\Ga,\mathfrak{u}_{\Ad\rho})\to H^{2}(\Ga,\RR)\cong\RR,
\]
\[
(u_{1},u_{2})\mapsto (u_{1}\smile u_{2})(\gamma_{1},\gamma_{2})=-K\left(u_{1}(\gamma_{1}),\Ad\rho(\gamma_{1})u_{2}(\gamma_{2})\right).
\]
The latter defines Goldman's symplectic structure in $\cK^{s}$ \cite{Gol84}.
Thus, the map $\cF_{U}$ is also a symplectomorphism between $\cN^{s}$ and $\cK^{s}$. 
\end{remark}

\bibliographystyle{amsalpha}

\providecommand{\bysame}{\leavevmode\hbox to3em{\hrulefill}\thinspace}
\providecommand{\MR}{\relax\ifhmode\unskip\space\fi MR }
\providecommand{\MRhref}[2]{%
  \href{http://www.ams.org/mathscinet-getitem?mr=#1}{#2}
}
\providecommand{\href}[2]{#2}

\end{document}